\documentclass[11pt, a4paper]{amsart}

\usepackage[euler-digits]{eulervm}

\usepackage{amsfonts, amsthm, amssymb, amsmath, stmaryrd}
\usepackage{mathrsfs,array}
\usepackage{eucal,fullpage,times,color,enumerate,accents}
\usepackage{url}

\usepackage{color}
\usepackage{mathrsfs}
\usepackage{amssymb}
\usepackage{tikz}
\usepackage{tikz-cd}
\usepackage{bm}
\usepackage{enumerate}

\setlength{\marginparwidth}{1in}
\setlength{\marginparsep}{0in}
\setlength{\marginparpush}{0.1in}
\setlength{\topmargin}{0in}
\setlength{\headheight}{0pt}
\setlength{\headsep}{0pt}
\setlength{\footskip}{.3in}
\setlength{\textheight}{9.0in}
\setlength{\textwidth}{6in}
\setlength{\parskip}{4pt}

\title{Toric graph associahedra and compactifications of $M_{0,n}$}
\author{Rodrigo Ferreira da Rosa, David Jensen, and Dhruv Ranganathan}
\date{\today}

\address{Rodrigo Ferreira da Rosa \\ Department of Mathematics, Yale University}
\email{rodrigo.ferreiradarosa@yale.edu}

\address{David Jensen \\ Department of Mathematics, University of Kentucky}
\email{dave.jensen@uky.edu}

\address{Dhruv Ranganathan \\ Department of Mathematics, Yale University}
\email{dhruv.ranganathan@yale.edu}

\newtheorem*{theorem*}{Theorem}
\newtheorem{theorem}{Theorem}
\newtheorem{corollary}[theorem]{Corollary}

\newtheorem{proposition}[theorem]{Proposition}
\newtheorem{definition}[theorem]{Definition}

\newtheorem{construction}{Construction}

\newtheorem{quasi-theorem}[theorem]{Quasi-Theorem}

\newtheorem{rem1}[theorem]{Remark}
\newenvironment{remark}{\begin{rem1}\em}{\end{rem1}}

\newtheorem{not1}[theorem]{Notation}


\newcommand{\PP}{\mathbb{P}}
		
\newcommand{\RR} {{\mathbb R}}

\newcommand{\PG}{{\mathcal P}G}





\newcommand{\cal}{\mathcal}

\def\cC{{\cal C}}











\begin{document}

\pagestyle{plain}
\maketitle

\begin{abstract}
To any graph $G$ one can associate a toric variety $X(\mathcal{P}G)$, obtained as a blowup of projective space along coordinate subspaces corresponding to connected subgraphs of $G$. The polytope of this toric variety is the graph associahedron of $G$, a class of polytopes that includes the permutohedron, associahedron, and stellahedron. We show that the space $X(\mathcal{P}{G})$ is isomorphic to a Hassett compactification of $M_{0,n}$ precisely when $G$ is an iterated cone over a discrete set. This may be viewed as a generalization of the well-known fact that the Losev--Manin moduli space is isomorphic to the toric variety associated to the permutohedron.
\end{abstract}

\section{Introduction}

In this note, we study the relationship between two families of blowups of projective space. The first is given by Hassett's spaces of weighted pointed stable rational curves~\cite{Has03}. The second is a family of toric varieties built as blowups of projective space, from polytopes known as \textit{graph associahedra}.

Given a simple graph $G$ on $d+1$ vertices, the graph associahedron $\PG$ is a $d$-dimensional convex polytope, constructed by truncating the $d$-simplex based on the connected subgraphs of $G$. We review this construction in Section~\ref{sec: background}. If $G$ is the complete graph on $d+1$ vertices, then $\PG$ is the permutohedron. Our motivation for the present work goes back to the following beautiful result of Losev and Manin~\cite{LM}.

\begin{theorem*}
Let $X(\mathcal P K_{n-2})$ be the toric variety associated to the $(n-3)$ dimensional permutohedron. There is an isomorphism
\[
X(\mathcal P K_{n-2}) \cong \overline M_{0,n}^{LM},
\]
where $\overline M_{0,n}^{LM}$ is the Losev--Manin space of chains of rational curves with $n$ marked points.
\end{theorem*}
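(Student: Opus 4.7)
The plan is to exhibit $\overline{M}_{0,n}^{LM}$ as a smooth projective toric variety with dense torus $T = \mathbb{G}_m^{n-3}$, and to identify its fan with the normal fan of the permutohedron $\mathcal{P}K_{n-2}$. Since both sides are known to be smooth and proper, it suffices to match the combinatorics of torus-invariant boundary strata.

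First I would set up the toric structure on the Losev--Manin side. Placing the two heavy points at $0$ and $\infty$ on $\mathbb{P}^1$ and quotienting by the residual $\mathbb{G}_m$ that fixes them, the moduli of $n$-pointed chains with weights $(1,1,\epsilon,\ldots,\epsilon)$ admits an open embedding $\mathbb{G}_m^{n-3} \hookrightarrow \overline{M}_{0,n}^{LM}$, namely the locus $M_{0,n}$ itself viewed inside $(\mathbb{P}^1 \setminus \{0,\infty\})^{n-2}/\mathbb{G}_m$. The diagonal $\mathbb{G}_m^{n-2}$-action on the $n-2$ light points descends to $T = \mathbb{G}_m^{n-2}/\mathbb{G}_m$ and extends to the whole compactification since bubbling occurs equivariantly; together with Hassett's smoothness and properness results this certifies $\overline{M}_{0,n}^{LM}$ as a projective toric variety for $T$.

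Next I would read off the boundary combinatorics. A torus-invariant divisor parametrizes chains in which a proper nonempty subset $S \subsetneq \{1,\ldots,n-2\}$ of the light points has bubbled off onto a new component; the choice of side (toward $0$ or toward $\infty$) is absorbed by the residual $\mathbb{G}_m$, leaving a single divisor $D_S$ per subset. Because every curve in $\overline{M}_{0,n}^{LM}$ is a chain, two divisors $D_S$ and $D_{S'}$ intersect if and only if $S \subset S'$ or $S' \subset S$, and deeper strata are indexed by nested chains of subsets. This is precisely the stratification of the toric variety of $\mathcal{P}K_{n-2}$: the rays of the braid fan in $\mathbb{Z}^{n-2}/\mathbb{Z}(1,\ldots,1)$ are indexed by the same proper nonempty subsets, and cones are spanned by nested chains of such subsets.

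The remaining task, which is where I expect the main technical friction, is to check that the matching is an isomorphism of fans and not merely of face posets. Concretely, one must identify the character lattice of $T$ with $\mathbb{Z}^{n-2}/\mathbb{Z}(1,\ldots,1)$ via the coordinates of the light points, and verify that the divisorial valuation along $D_S$ coincides with the braid-arrangement ray generator $e_S = \sum_{i \in S} e_i$. This reduces to analyzing a single one-parameter family in which the points indexed by $S$ collide at rate $t \to 0$, computing the order of vanishing of each torus character, and comparing. Once that local computation is done, the isomorphism of fans yields the desired $T$-equivariant isomorphism $X(\mathcal{P}K_{n-2}) \cong \overline{M}_{0,n}^{LM}$.
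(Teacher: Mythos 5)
Your outline is correct, but it takes a genuinely different route from the paper. The paper does not prove this statement by computing a fan: it quotes it from Losev--Manin, and its own machinery (the proof of Theorem \ref{thm: mainthm}, of which this is the case $G = K_{n-2} = Cone^{n-3}(v_1)$) argues via Kapranov's blowup model, exhibiting both sides as blowups of $\PP^{n-3}$ along the same coordinate subspaces in the same order --- the toric side by definition, the moduli side by applying Hassett's reduction morphism $\overline M_{0,n} \to \overline M_{0,\omega}$ to Kapranov's realization and checking, via $\omega$-stability of the universal curve over each exceptional divisor $E_T$, which boundary divisors survive and which are contracted. That approach buys uniformity over all graphs $G$ and sidesteps lattice-level fan verification entirely; yours buys a $T$-equivariant isomorphism and a moduli interpretation of every torus orbit, and you correctly isolate the crux: upgrading the poset bijection to a fan isomorphism via the valuation computation (a face-poset match alone does not determine the toric variety, as the paper itself cautions in Section~\ref{sec: background}). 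Three local corrections to your sketch. First, the dense orbit is not $M_{0,n}$ but the strictly larger locus of irreducible curves with light points in $\mathbb{G}_m$: since collisions among $\epsilon$-weighted points force no bubbling, $M_{0,n}$ is a proper open subset of the torus. Second, and relatedly, your final degeneration test should send the points of $S$ to a heavy point (equivalently, apply the one-parameter subgroup $t \mapsto t^{e_S}$), since light points merely colliding with one another stay inside the dense orbit and limit to no divisor. Third, the divisor indexing is cleaner than ``the side is absorbed'': label each two-component chain by the set $S$ of light points on the $\infty$-component; bubbling $S$ off at $0$ is literally the same stratum as keeping $S^c$ at $\infty$, so one gets $2^{n-2}-2$ distinct divisors $D_S$, with $D_S \neq D_{S^c}$, indexed by proper nonempty subsets, matching the rays $e_S$ of the braid fan. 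You also silently use that the paper's $X(\mathcal P K_{n-2})$, defined by iterated stellar subdivision of the fan of $\PP^{n-3}$ in increasing codimension, has fan equal to the barycentric subdivision, i.e.\ the braid fan, i.e.\ the normal fan of the permutohedron; this is standard but should be stated. With the valuation step executed, your argument is complete.
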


\subsection{Main results} In addition to the permutohedron, graph associahedra contain several important families of polytopes, including the associahedron (or \textit{Stasheff} polytope), the cyclohedron (or \textit{Bott--Taubes} polytope), and the stellahedron. The main result of this paper is a complete classification of graphs $G$ such that $X(\PG)$ is isomorphic to a Hassett compactification of $M_{0,n}$. Recall that given a graph $G$, the cone, denoted $Cone(G)$, is the graph obtained by introducing one new vertex $v_0$ to $G$, and connecting each of the vertices in $G$ to $v_0$. We denote the $\ell$-times iterated cone by $Cone^{\ell}(G)$.

\begin{theorem}
\label{thm: mainthm}
Let $G$ be a graph on $(n-2)$ vertices. Then there exists a weight vector $\omega\in (0,1]^n$ such that
\[
X(\PG)\cong \overline M_{0,\omega}
\]
if and only if $G$ is an iterated cone over a discrete set. That is,
\[
G\cong Cone^{n-k-2}(\sqcup_{i=1}^k v_i).
\]
\end{theorem}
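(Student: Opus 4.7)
The plan is to realize both sides of the desired isomorphism as explicit iterated blowups of $\mathbb{P}^{n-3}$ and match them combinatorially. By construction, $X(\PG)$ is obtained from $\mathbb{P}^{n-3}$ by iteratively blowing up the coordinate subvarieties $L_S := \{x_j = 0 : j \in S\}$ indexed by tubes (non-empty proper connected subgraphs) $S$ of $G$, performed in order of increasing dimension. On the Hassett side, Kapranov's construction realizes $\overline{M}_{0,n}$ as an iterated blowup of $\mathbb{P}^{n-3}$ along $n-1$ points in general position together with the subspaces they span, and Hassett's reduction morphism $\overline{M}_{0,n} \to \overline{M}_{0,\omega}$ contracts precisely those boundary divisors $\delta_{J\mid J^c}$ for which $\sum_{j \in J} \omega_j \leq 1$ or $\sum_{j \in J^c} \omega_j \leq 1$. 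The desired isomorphism $X(\PG) \cong \overline{M}_{0,\omega}$ therefore translates into a combinatorial matching between the tubes of $G$ and the surviving Kapranov blowups determined by $\omega$.

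For the sufficiency direction $(\Leftarrow)$, given $G = Cone^{n-k-2}(\sqcup_{i=1}^k v_i)$, I would exhibit an explicit weight vector. The idea is to put the two ``auxiliary'' markings (playing the role of Kapranov's base) at weight close to $1$, assign the $\ell = n-k-2$ markings corresponding to cone vertices a common moderate weight, and give the $k$ markings corresponding to discrete vertices a common small weight $\epsilon$. The numerical choices are to be arranged so that the Hassett threshold condition on a subset of non-auxiliary markings is equivalent to ``the subset is a singleton or contains at least one cone vertex,'' which is precisely the tube condition for $G$.

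For the necessity direction $(\Rightarrow)$, I would proceed by induction on the number of vertices of $G$. The inductive step rests on producing, given an isomorphism $X(\PG) \cong \overline{M}_{0,\omega}$, a vertex $c \in V(G)$ adjacent to every other vertex. Such a cone vertex is read off from a ``universally heavy'' marking in $\omega$---one which, together with any other positive weight, pushes the corresponding Kapranov subspace above threshold---and this marking must correspond to a vertex of $G$ adjacent to all others in order for the tube structure of $G$ to match the surviving Kapranov blowups. Deleting $c$ from $G$ and the corresponding marking from $\omega$ (after appropriate renormalization) reduces to the case of one fewer vertex, and the theorem follows.

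The principal obstacle is this necessity direction. The combinatorial structure on $G$ forced by Hassett-realizability is not simply that the family of tubes of size at least $2$ be monotone under supersets: for instance, the $4$-cycle $C_4$ has a monotone tube family (every such tube has all its supersets also tubes) yet $C_4$ is not an iterated cone. Overcoming this obstacle requires exploiting the full threshold structure of the Hassett condition---not just upward-closure---to extract the distinguished cone vertex. We expect the cleanest route is to show that the tube family of $G$ being realizable by Hassett weights is equivalent to a threshold function property stronger than monotonicity, and that this property characterizes precisely the iterated cone graphs.
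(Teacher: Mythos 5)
Your sufficiency direction is essentially the paper's: the explicit weights you sketch (Kapranov point at weight $1$, small weight $\epsilon$ on the markings for the discrete vertices, a slightly larger weight on the cone-vertex markings, and a near-$1$ weight on $p_0$ calibrated so that $p_0$ can collide with any set of discrete-vertex points but with no cone point) are exactly what the paper writes down, namely $c_M = 1$, $c_0 = 1-(k+1)\epsilon$, $c_c = (k+2)\epsilon$, $c_i = \epsilon$. But the necessity direction, which you yourself flag as the principal obstacle, is a genuine gap: you end with ``we expect the cleanest route is\ldots,'' i.e.\ you do not prove it, and the inductive scheme you propose has two concrete defects. First, the ``universally heavy marking'' heuristic for detecting a cone vertex is wrong in detail: in the actual weight vector the cone points carry tiny weight $(k+2)\epsilon$, so no marking other than $p_M$ is heavy against arbitrary other markings. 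What distinguishes a cone vertex is only its interaction with the single distinguished marking $p_0$ (the identity of the torus): tubes $T$ are exactly the sets with $c_0+\sum_{i\in T}c_i>1$. Your framework never singles out $p_0$, so the threshold structure you need is anchored at a point your argument does not identify. Second, even granting a cone vertex $c$, the inductive step requires knowing that $X(\mathcal{P}(G\setminus c))$ is again a Hassett space, and there is no forgetful-type morphism supplied to justify this; the dimension drops by one and the relation between the two blowups is not a restriction of weights.

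The paper avoids induction entirely and closes the gap you identified with two moves. First, from the reduction morphism $\rho\colon \overline{M}_{0,n}\to \overline{M}_{0,\omega}=X(\mathcal{P}G)$ and stability of the universal curve over exceptional divisors, it derives that the weights linearly separate tubes from non-tubes: $c_0+\sum_{i\in T}c_i>1$ for every nontrivial tube $T$, and $c_0+\sum_{j\in D}c_j\leq 1$ for every non-tube $D$. This is precisely the ``threshold function property stronger than monotonicity'' you correctly guessed was needed (and your $C_4$ example is indeed killed by it: the vertex set partitions into two tubes, the opposite edges, and into two non-tubes, the diagonals, forcing $c_0>1$). Second, it extracts two purely combinatorial obstructions (no non-tube contains a nontrivial tube; no set partitions into $k$ tubes and $k'\leq k$ non-tubes) and then argues directly: fixing a maximal independent set $I$, Obstruction A forces every vertex outside $I$ to be adjacent to all of $I$, and Obstruction B forces any two vertices outside $I$ to be adjacent, so $G$ is an iterated cone over $I$. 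Your diagnosis of what is missing was accurate, but the diagnosis is not a proof, and the inductive route you sketch would need substantial repair at both points above to be made to work.
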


We give a precise description of the weights for the associated moduli space in Remark~\ref{rem: specific-weights}.

As an example, we have the following result for the stellahedron.
\begin{corollary}
If $G$ is a star graph on $n-2$ vertices, then
\[
X(\PG) \cong \overline M_{0,\omega},
\]
where $\omega = (1,\frac{1}{2},\frac{1}{2}+\epsilon,\epsilon,\ldots, \epsilon)$, with $\epsilon<\frac{1}{n}$.
\end{corollary}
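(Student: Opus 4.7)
The plan is to deduce this directly from Theorem~\ref{thm: mainthm}, with the explicit weight vector extracted from the description promised in Remark~\ref{rem: specific-weights}. The star graph on $n-2$ vertices is exactly $Cone(\sqcup_{i=1}^{n-3} v_i)$, a single cone over a discrete set of $n-3$ points, corresponding to the case $n-k-2 = 1$, $k = n-3$ in the statement of the theorem. Hence Theorem~\ref{thm: mainthm} immediately yields the existence of some $\omega$ with $X(\PG) \cong \overline M_{0,\omega}$; only the identification of $\omega$, up to permutation of its entries, remains.

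To read off the weights, I expect Remark~\ref{rem: specific-weights} to assign markings to the data of the iterated cone construction as follows: two ``base'' markings inherited from the Losev--Manin normalization (with weights $1$ and $\tfrac{1}{2}$), one marking of weight $\tfrac{1}{2}+\epsilon$ introduced by the single cone operation at the apex of the star, and $n-3$ light markings of weight $\epsilon$, one for each leaf. The hypothesis $\epsilon < 1/n$ then guarantees $(n-3)\epsilon < 1$, so that any subset of light markings is allowed to collide at a smooth point of the curve, while the small perturbation from $\tfrac{1}{2}$ to $\tfrac{1}{2}+\epsilon$ at the apex selects the correct wall-crossing chamber of the Hassett space (distinguishing the apex from the other ``half-weight'' marking).

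As a consistency check, one matches the boundary divisors of $\overline M_{0,\omega}$ with the toric boundary of $X(\PG)$. The latter is stratified by the proper connected subgraphs of the star, namely the singleton vertices together with the subsets consisting of the apex and any nonempty subset of leaves. A short case analysis on the subsets $S \subset \{1,\dots,n\}$ whose $\omega$-weight and complementary $\omega$-weight each exceed $1$ produces exactly this list, yielding the desired bijection with the connected subgraphs and hence with the blown-up coordinate subspaces defining $X(\PG)$. The main difficulty is the bookkeeping in the second paragraph: extracting from Remark~\ref{rem: specific-weights} precisely the tuple $(1,\tfrac{1}{2},\tfrac{1}{2}+\epsilon,\epsilon,\dots,\epsilon)$ rather than some combinatorially equivalent variant, and making sure the distinguished apex marking receives weight $\tfrac{1}{2}+\epsilon$ and not $\tfrac{1}{2}$.
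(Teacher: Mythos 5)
Your overall route is the same as the paper's: the paper proves this corollary in one line by observing that the star graph is $Cone(\sqcup_{i=1}^{n-3}v_i)$ and invoking Theorem~\ref{thm: mainthm}, exactly as you do. The only real content beyond that citation is checking that the particular weight vector $(1,\tfrac12,\tfrac12+\epsilon,\epsilon,\ldots,\epsilon)$ lies in the correct chamber, i.e.\ reproduces the conditions (W1)--(W2) of Proposition~\ref{prop:weightrelations} for the star graph; the weights actually produced in the proof of the theorem (Remark~\ref{rem: specific-weights}, with $k=n-3$) are $c_M=1$, $c_0=1-(n-2)\epsilon$, $c_{\mathrm{apex}}=(n-1)\epsilon$, $c_{\mathrm{leaf}}=\epsilon$, not the ones in the corollary, so this chamber comparison is where the work is.

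Here is the gap: the one inequality you verify explicitly, $(n-3)\epsilon<1$, is not the binding constraint. The non-tubes of the star are the subsets of leaves of size at least two, and by (W2) the relevant condition for the largest of these --- equivalently, for the reduction map to contract the divisor $\Delta_{\{0\}\cup\{\text{leaves}\}}$, so that the coordinate point of $\PP^{n-3}$ dual to the apex is \emph{not} blown up --- is
\[
c_0+\sum_{\text{leaves}}c_j=\tfrac12+(n-3)\epsilon\le 1,
\]
i.e.\ $\epsilon\le\tfrac{1}{2(n-3)}$. The hypothesis $\epsilon<\tfrac1n$ implies this only for $n\le 6$; for $n\ge 7$ one can choose $\epsilon\in(\tfrac{1}{2(n-3)},\tfrac1n)$, and then $\overline M_{0,\omega}$ acquires an extra boundary divisor absent from $X(\mathcal P S_{n-2})$. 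So the ``short case analysis'' you defer is precisely where the argument must be completed, and when carried out it shows the stated bound on $\epsilon$ must be tightened to $\epsilon\le\tfrac{1}{2(n-3)}$ (or one should simply take $\epsilon$ sufficiently small, or use the weights of Remark~\ref{rem: specific-weights}, whose choice $c_0=1-(n-2)\epsilon$ avoids the issue). Your heuristic that the weights $1$ and $\tfrac12$ are ``inherited from the Losev--Manin normalization'' is also not how the paper assigns them --- Losev--Manin has weights $(1,1,\epsilon,\ldots,\epsilon)$ --- but that is cosmetic; the chamber check is the substantive point.
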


Several well studied polytopes do not give rise to Hassett spaces.
\begin{corollary}
Let $|V(G)|\geq 4$. If $G$ is a path graph, a cycle, or a complete bipartite or multipartite graph, then $X(\PG)$ is not isomorphic to $\overline M_{0,\omega}$ for any choice of weight vector $\omega$.
\end{corollary}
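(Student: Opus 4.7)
The plan is to derive this purely as a consequence of Theorem~\ref{thm: mainthm}: I need to show that none of the listed graphs is an iterated cone $Cone^{\ell}(\sqcup_{i=1}^k v_i)$ over a discrete set (with the understanding that for the bipartite/multipartite cases one excludes the degenerate stars $K_{1,m}$, which genuinely are iterated cones and hence give Hassett spaces by the previous corollary).

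The key combinatorial observation I would use is a structural reformulation: a graph $H$ is an iterated cone over a discrete set if and only if $H$ admits a partition $V(H) = U \sqcup I$ where every vertex of $U$ is adjacent to every other vertex of $H$ (so $U$ is a clique of \emph{universal} vertices) and $I$ is an independent set. Equivalently, $H$ is the join $K_{\ell} \vee \overline{K_k}$, which is the complete multipartite graph $K_{\underbrace{\scriptstyle 1,\ldots,1}_{\ell},k}$ having at most one part of size exceeding one. Any vertex of $H$ is therefore either universal (degree $|V(H)|-1$) or else has all its non-neighbors forming an independent subset containing it.

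Given this, I check each listed family against the criterion, assuming $n := |V(G)| \geq 4$:

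\begin{itemize}
\item For a path $P_n$ or cycle $C_n$, every vertex has degree at most $2 < n-1$, so $G$ has no universal vertex. Thus $\ell = 0$ is forced, but then $G$ would be edgeless, contradicting that $P_n, C_n$ have edges.
\item For $K_{a,b}$ with $a,b \geq 2$, a vertex has degree $a$ or $b$, and $\max(a,b) < a+b-1$, so again there is no universal vertex while $G$ has edges.
\item For a complete multipartite graph $K_{n_1,\ldots,n_r}$ with at least two parts of size $\geq 2$, the universal vertices are precisely those in singleton parts; deleting them leaves a complete multipartite graph on at least two parts of size $\geq 2$, which is not independent.
\end{itemize}

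In each case $G$ fails the cone characterization, so Theorem~\ref{thm: mainthm} yields the conclusion. The ``main obstacle'' is essentially bookkeeping: one must make sure the structural reformulation of iterated cones is correct (an easy induction on the number of cone operations, using that coning preserves the universal status of previous cone vertices and adds a new universal vertex), and one must be explicit about the excluded degenerate case $K_{1,m}$ for the bipartite assertion — which is precisely the star graph already handled in the preceding corollary.
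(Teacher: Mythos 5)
Your proof is correct and takes the same route the paper intends: the corollary is stated as an immediate consequence of Theorem~\ref{thm: mainthm}, and your characterization of iterated cones over discrete sets (universal clique joined to an independent set) together with the degree/universal-vertex checks is exactly the verification needed. Your observation that the literal statement must exclude the degenerate cases $K_{1,m}$ and complete multipartite graphs with at most one part of size exceeding one (which \emph{are} iterated cones and hence Hassett spaces) is a valid and worthwhile clarification of the corollary as written.
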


\begin{figure}
\includegraphics[scale=0.3]{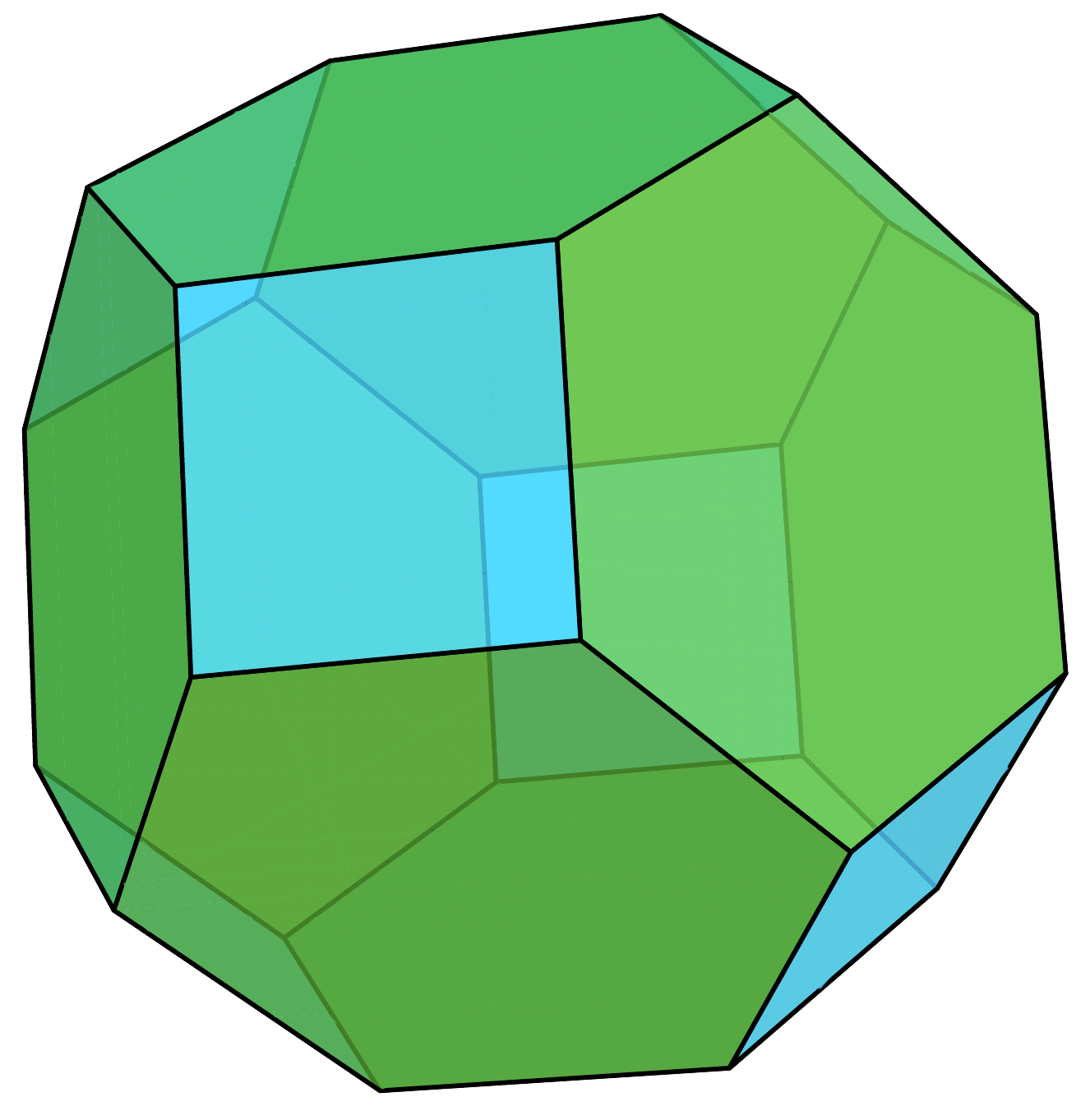}
\caption{The $3$ dimensional permutohedron. This is the graph associahedron for the complete graph on $4$ vertices.}
\end{figure}

\subsection{Context and motivation} In~\cite{Kap93}, Kapranov gives the following beautiful description of the moduli space $\overline M_{0,n}$ as a blowup of $\PP^{n-3}$.

\begin{theorem}[Kapranov]
The moduli space $\overline M_{0,n}$ is isomorphic to the iterated blowup of $\PP^{n-3}$ at $(n-1)$ points $p_1,\ldots, p_{n-1}$ in linear general position, followed by the blowups of the strict transforms of the linear subspaces through these points, in increasing order of dimension.
\end{theorem}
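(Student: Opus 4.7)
The plan is to construct an explicit birational morphism $\overline{M}_{0,n}\to \PP^{n-3}$ and then factor it through the claimed iterated blowup $X$ using the universal property of blowing up; the proof then reduces to showing the factorization is an isomorphism.

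First I would build the morphism $f:\overline{M}_{0,n}\to \PP^{n-3}$ by means of the linear system $|\psi_n|$ of the $n$-th cotangent line bundle. This system is globally generated with $n-2$ independent sections, so it defines a morphism to $\PP^{n-3}$. Equivalently, on the open locus $M_{0,n}$ one can normalize the first three marked points to $0,1,\infty\in \PP^1$, parametrize the $n$-th point via the rational normal curve of degree $n-3$ through $n-1$ prescribed general points $p_1,\ldots,p_{n-1}\in \PP^{n-3}$, and extend the resulting rational map.

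Next I would analyze the behavior of $f$ on the boundary divisors of $\overline{M}_{0,n}$. For $S\subset \{1,\ldots,n-1\}$ with $2\le |S|\le n-3$, let $D_S$ denote the boundary divisor whose generic point is a two-component curve carrying the marked points of $S$ on one component and those of $S^c\ni n$ on the other. A direct computation shows that $f$ contracts $D_S$ onto the linear span $L_S = \Span\{p_i:i\in S\}\subset \PP^{n-3}$ of dimension $|S|-1$, and that these spans are compatible in the sense that $L_S\cap L_T = L_{S\cap T}$ whenever $S\cap T\neq \emptyset$. By the universal property of blowing up, $f$ lifts successively through the blowup of the $n-1$ points $p_i$, then through the blowups of the strict transforms of the lines $L_{\{i,j\}}$, then the $2$-planes $L_{\{i,j,k\}}$, and so on in order of increasing dimension, producing a birational morphism $\tilde f:\overline{M}_{0,n}\to X$.

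Finally I would verify that $\tilde f$ is an isomorphism. Both source and target are smooth projective varieties of dimension $n-3$, and by the analysis above each exceptional divisor of the composite $X\to \PP^{n-3}$ is the image of a corresponding boundary divisor of $\overline{M}_{0,n}$; dually, every boundary divisor $D_S$ is identified with an exceptional divisor or, when $|S|=n-2$, with the strict transform of a hyperplane. Consequently $\tilde f$ contracts no divisor, and Zariski's Main Theorem forces it to be an isomorphism. The main obstacle is the combinatorial bookkeeping required to show that after blowing up all centers of a given dimension, the strict transforms of the centers of the next dimension become pairwise disjoint, so that the order of blowups within a fixed dimension is immaterial and the two boundary stratifications match bijectively. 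A clean way to organize this is by induction on $n$ using the forgetful map $\overline{M}_{0,n}\to \overline{M}_{0,n-1}$, which allows one to lift the blowup structure one marked point at a time.
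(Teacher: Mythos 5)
This theorem is Kapranov's, quoted from~\cite{Kap93}; the paper states it without proof (the surrounding discussion in Section~2 only sketches the rational-normal-curve picture), so there is no in-paper argument to compare against. Your outline is essentially Kapranov's own strategy, as later streamlined by Hassett and others: define the map by the base-point-free system $|\psi_n|$, identify the images of the boundary divisors, lift through the iterated blowup by the universal property, and conclude by Zariski's Main Theorem together with purity of the exceptional locus. That architecture is correct, and you rightly isolate the real work: the ``direct computation'' of where each $D_S$ goes, and the verification that after blowing up all centers of a given dimension the strict transforms of the next centers are disjoint (the building-set condition that makes the order within a dimension irrelevant).

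There is, however, a concrete indexing error that you would need to fix before the dimension counts close up. With your convention that $S$ labels the component \emph{not} carrying the $n$-th marking, the divisor $D_S$ is contracted onto the span of the points indexed by the \emph{other} component, i.e.\ onto $L_{S^c\setminus\{n\}}$, which has dimension $n-|S|-2$ --- not onto $L_S$ of dimension $|S|-1$. (Sanity check with $n=5$: the divisor where markings $i$ and $5$ collide, i.e.\ $S=\{1,2,3,4\}\setminus\{i\}$ with $|S|=n-2$, is the exceptional divisor over the point $p_i$, whereas $D_{\{i,j\}}$ with $|S|=2$ is the strict transform of the line through the two points $p_k$, $p_l$ with $\{k,l\}=\{1,2,3,4\}\setminus\{i,j\}$.) Consequently your final dichotomy is reversed: the divisors identified with strict transforms of hyperplanes are those with $|S|=2$, and those with $|S|=n-2$ are the exceptional divisors over the points $p_1,\ldots,p_{n-1}$. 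Since $S\mapsto S^c\setminus\{n\}$ is a bijection on the relevant index sets, the global count of divisors still matches and the ZMT argument survives once the labels are corrected, but as written the intermediate claims are false.
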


This blowup is manifestly not toric. The maximal \textit{toric} blowup of $\PP^{n-3}$, at the coordinate subspaces in order of increasing dimension, is isomorphic to the toric variety associated to the permutohedron. This gives rise to the alternative modular compactification of $M_{0,n}$ studied by Losev and Manin in~\cite{LM}. Another point of view, taken by Hassett, is to give the marked points ``weights'', allowing markings to collide if their total weight is sufficiently small. The Losev--Manin space is obtained by giving the first two marked points weight $1$, and the remaining points weight $\epsilon$ for sufficiently small $\epsilon$.

Hassett also points out that $\PP^{n-3}$ is itself a modular compactification of $M_{0,n}$, given by the weight vector that assigns weight $1$ to the first mark and weight $\epsilon$ to the remaining marks, for $\epsilon$ sufficiently small. Hassett's perspective yields a large class of interesting compactifications of $M_{0,n}$ lying in between $\PP^{n-3}$ and $\overline M_{0,n}$.

On the other hand, given any finite graph $G$ on $n-2$ vertices, Carr and Devadoss~\cite{CD06} exhibit the graph associahedron $\PG$ as a truncation of the simplex on $n-2$ vertices. This naturally gives rise to a toric blowup of $\PP^{n-3}$. The complete graph $K_{n-2}$ produces the permutohedral variety.

Theorem \ref{thm: mainthm} tells us that there is remarkably little overlap between these two constructions.  Much of the work on the birational geometry of $M_{0,n}$ has focused on modular compactifications such as Hassett's \cite{GJM, Has03, Smyth}.  In a strict sense, the maps between such compactifications are well behaved (see \cite[Theorem 1.15]{Smyth}), leading some to believe that $\overline{M}_{0,n}$ has good Mori-theoretic properties.  The recent proof that $\overline{M}_{0,n}$ is not a Mori Dream Space for $n$ sufficiently large~\cite{CT13, GK14} relies instead on the combinatorics of toric compactifications, suggesting a significant difference between these two points of view.

\[
\begin{tikzcd}
 \phantom{1} & {\color{gray}\overline{M}_{0,n}} \arrow[color=gray]{dr} \arrow[color=gray]{dl} & \phantom{1}\\
\overline{M}_{0,n}^{LM}\arrow{d} \arrow{rr}{\cong} & & X(\mathcal{P}K_{n-2})\arrow{d}\\
\vdots \arrow{d} & & \vdots\arrow{d} \\
\overline{M}_{0,\omega} \arrow[dashed]{rr}{?} \arrow[color=gray]{dr} & & X(\PG) \arrow[color=gray]{dl} \\
 \phantom{1} & {\color{gray}\PP^{n-3}} & \phantom{1}\\
\end{tikzcd}
\]

It is worth noting that, although we are primarily concerned with graph associahedra of connected graphs, the disconnected case has been considered by Carr, Devadoss, and Forcey in~\cite{CDF}. In this formulation, the graph associahedron of the discrete set on $n-2$ vertices is the $(n-3)$-simplex. The associated toric variety is simply $\PP^{n-3}$. In this sense, $\PP^{n-3}$ is a second example, after Losev--Manin, of a toric graph associahedron that is a Hassett space.

Graph associahedra encompass a large number of important polytopes that arise in a multitude of situations in geometry and topology. For instance, the real locus $\overline M_{0,n}(\RR)$ of the Grothendieck--Knudson space of $n$-pointed rational curves has an intrinsic tiling by associahedra~\cite{Sta63} (i.e. the graph associahedron of the path graph).  More generally, the wonderful compactification of a hyperplane arrangement of a Coxeter system has a tiling by graph associahedra~\cite{CD06}.  The cyclohedra first appeared in work of Bott and Taubes on knot invariants~\cite{BT94}. Recently, Bloom has exhibited beautiful connections between the combinatorics of graph associahedra and Floer homology theories~\cite{Bloom11}.

In algebraic geometry, the permutohedron is closely related to the geometry of the Cremona transformation on projective space, and in turn to the closed topological vertex in Gromov--Witten theory~\cite{BK}. The Gromov--Witten theory of toric graph associahedra is considered in~\cite{KRRW}. In the present work, we further explore the connection first noticed by Kapranov, and Losev and Manin, between compactifications of $M_{0,n}$ and the permutohedron.

\subsection*{Acknowledgements} This work was completed as part of the 2014 Summer Undergraduate Mathematics Research at Yale (SUMRY) program, where the first author was a participant and the second and third authors were mentors. We are grateful to all involved in the SUMRY program for the vibrant research community that they helped create. It is a pleasure to thank Dagan Karp, who actively collaborated with the third when the ideas in the present text were at their early stages. We thank Satyan Devadoss for his encouragement, as well as permission to include Figure~\ref{fig: star-4} from~\cite{CDF}. Finally, we thank the referee for their careful reading and comments. The authors were supported by NSF grant CAREER DMS-1149054 (PI: Sam Payne). 

\section{Background}\label{sec: background}

\subsection{Toric graph associahedra} Let $G$ be a connected finite simple graph with vertex set $V(G)=\{0,{\ldots},d \}$. The graph associahedron $\PG$ of $G$ is defined by iterated truncations of the $d$-simplex. Let $\{H_i\}$ denote the set of facets of the $d$-simplex $\Delta_d$, and fix a bijection $V(G)\leftrightarrow \{H_i\}$. Notice that, for every subset of $S\subset V(G)$ there corresponds a unique face of $\Delta_d$, which is the intersection of the facets $H_i$ for $i\in S$.

\begin{definition}
A \textit{tube} in $G$ is a subset $T\subset V(G)$, such that the induced subgraph on the vertices $T$ is connected.  We say that a tube is \textit{trivial} if $\vert T \vert = 1$.  We call a subset of vertices $D$ a \textit{non-tube} if the induced subgraph is not connected.
\end{definition}

The polytope $\PG$ is constructed by the following procedure.

\begin{construction}
\textnormal{Fix a connected graph $G$ on $d+1$ vertices and a bijection between the vertices of $G$ and the facets of $\Delta_d$. Let $T_1,\ldots, T_\ell$ denote the tubes in $G$ having cardinality $d$. Let $f_1,\ldots, f_\ell$ denote the corresponding faces (vertices) of $\Delta_d$. Truncate the faces $f_i$, to produce a polytope $\mathcal P G^{(1)}$. Now, let $T'_1,\ldots, T'_r$ be the tubes in $G$ having cardinality $d-1$. These correspond to faces of dimension $1$ (edges) $e_1,\ldots, e_r$ in $\Delta_d$. Each such edge $e_k$ corresponds to a unique edge in $\mathcal P G^{(1)}$, which we continue to denote $e_k$. Truncate each such edge to obtain a polytope $\mathcal P G^{(2)}$. Proceed inductively, until the cardinality of the tubes are $2$. The resulting iterated truncation is the graph associahedron, denoted $\mathcal PG$. }
\end{construction}

\begin{figure}[h!]
\includegraphics{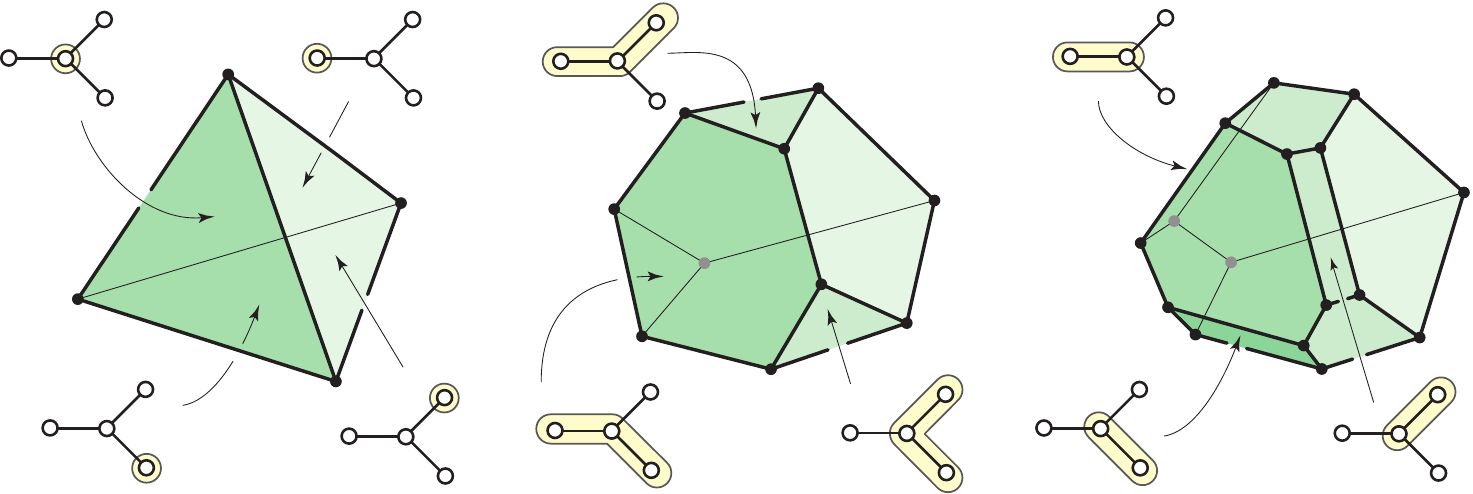}
\caption{The construction of the stellahedron as a truncation of the $3$-simplex. This is the graph associahedron of the $4$-star graph.}
\label{fig: star-4}
\end{figure}

We refer to~\cite{Dev09} for a more thorough description of these truncations, and other aspects of graph associahedra.

A toric variety is obtained from a fan, or a \textit{lattice} polytope, and the above construction does not give $\mathcal PG$ a canonical integer realization. Moreover, there can be distinct integer polytopes, producing different toric varieties, having identical posets of faces. Our point of view is to take the polytope of the toric variety $\PP^d$ (which is combinatorially a simplex) as a starting point, interpreting the construction of Devadoss and Carr as prescribing an iterated blowup of $\PP^d$.  We will refer to these varieties as \textit{toric graph associahedra}, and will denote them $X(\PG)$.

Fix a finite simple graph $G$ on $d+1$ vertices, and let $\Sigma$ denote the fan of the toric variety $\PP^d$. The cones of $\Sigma$ are in natural bijection with the faces of $\Delta_d$. As above, a tube $T$ corresponds to a unique cone $\sigma_T$ of $\Sigma$, and hence a unique coordinate subspace of codimension $|T|$.

\begin{definition}
The graph associahedral fan $\Sigma_G$ is obtained from $\Sigma$ by iterated stellar subdivison along the cones $\sigma_T$ for tubes $T$ of $G$, in increasing order of codimension.
\end{definition}

This fan is independent of the chosen order of subdivision among cones of a given dimension. This follows immediately from~\cite[Theorem 2.6]{CD06}.

\begin{definition}
The toric graph associahedron $X(\PG) := X(\Sigma_G)$, is defined to be be the toric variety associated to the fan $\Sigma_G$. The variety $X(\PG)$ is the iterated blowup of $\PP^d$ along the coordinate subspaces corresponding to tubes $T$ in order of increasing dimension.
\end{definition}

\begin{remark}
We can easily recover a polytope $\PG$ from the toric variety $X(\PG)$. Choose an equivariant projective embedding of $X(\PG)$. Then the poset of faces of the associated lattice polytope can be identified with that of the graph associahedron $\PG$. Alternatively, the canonically associated compactified fan of $\Sigma_G$, as described in~\cite[Section 2]{ACP}, is a polytope whose face poset is identified with that of $\PG$. This also coincides with the Kajiwara--Payne extended tropicalization of the toric variety $X(\PG)$~\cite[Remark 3.3]{Pay09}.
\end{remark}

\subsection{Kapranov's model and Hassett spaces}~\label{sec: Hassett}

The connection to moduli spaces comes from Kapranov's blowup model of $\overline{M}_{0,n}$.  Given a general point $p \in \PP^{n-3}$, there exists a unique rational normal curve $C$ through $p$, the point $p_0 = (1, \ldots , 1)$, and the $n-2$ coordinate points $p_1 , \ldots , p_{n-2}$.  The curve $C$, together with the $n$ points $p, p_0 , \ldots , p_{n-2}$, determines a point in $M_{0,n}$, and in this way we obtain a birational map $\PP^{n-3} \dashrightarrow \overline{M}_{0,n}$.  The indeterminacy loci of this map are the linear spans of subsets of the points $p_i$, and Kapranov~\cite{Kap93} shows that $\overline{M}_{0,n}$ is isomorphic to the blowup of $\PP^{n-3}$ along these linear spans, in increasing order of dimension.

By blowing up the projective space $\PP^{n-3}$ along some subset of these linear spans, one obtains an alternate compactification of $M_{0,n}$.  For example, blowing up $\PP^{n-3}$ along the linear spans of the coordinate points produces the well-known Losev-Manin space $\overline{M}_{0,n}^{LM}$.

Both the Grothendieck--Knudson compactification $\overline{M}_{0,n}$ and the Losev-Manin space $\overline{M}_{0,n}^{LM}$ are examples of a more general construction due to Hassett.  For each weight vector $\omega = (c_M , c_0 , \ldots , c_{n-2} )$ such that $0 < c_i \leq 1$ and $\sum c_i > 2$, Hassett constructs a smooth moduli space of $\omega$-stable curves $\overline{M}_{0,\omega}$ \cite{Has03}.

\begin{definition}
A genus 0 marked curve $(C, p_M , p_0 , \ldots , p_{n-2})$ is $\omega$-stable if
\begin{enumerate}[(S1)]
\item  the only singularities of $C$ are nodes,
\item  the marked points are smooth points of $C$,
\item  the total weight of coincident points is at most 1, and
\item  the line bundle $\omega_C (\sum c_i p_i )$ is ample.
\end{enumerate}
\end{definition}

The last condition can also be rephrased as saying that the total weight of marked points on any component of $C$, plus the number of nodes, must be strictly greater than 2.

We note the following property of Hassett's weighted spaces, which will be useful in the next section of the paper.

\begin{proposition} \cite[Theorem 4.1]{Has03}
Let $\omega = (c_M , c_0 , \ldots , c_{n-2})$ and $\omega' = (c_M' , c_0' , \ldots , c_{n-2}')$ be collections of weight data such that $c_i \geq c_i'$ for all $i\in \{0,\ldots,n-2\}$.  Then there exists a natural birational reduction morphism
$$\rho: \overline{M}_{0,\omega} \to \overline{M}_{0,\omega'} . $$
\end{proposition}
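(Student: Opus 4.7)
The plan is to define $\rho$ pointwise by contracting each $\omega'$-destabilized component of the source curve, then extend to families and check birationality.

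For the pointwise construction, given an $\omega$-stable curve $(C; p_M, p_0, \ldots, p_{n-2})$, I call a component $Z$ of $C$ \emph{$\omega'$-destabilized} if $-2 + m_Z + \sum_{p_j \in Z} c_j' \leq 0$, where $m_Z$ denotes the number of nodes on $Z$ and the sum is understood to include $c_M'$ when $p_M \in Z$; this is the failure of (S4) restricted to $Z$ under the new weight data. Let $C'$ be obtained from $C$ by contracting every maximal connected union $T$ of destabilized components to a single point, and take the images of the original marked points as the new markings.

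The key combinatorial claim is that each such $T$ is attached to the rest of $C$ by at most two boundary nodes. Write $|T|$ for the number of components of $T$ and $m_{bd}$ for its boundary nodes. Since $C$ is a tree of $\PP^1$'s, $T$ contains $|T|-1$ internal nodes. Summing the destabilization inequality over the components of $T$ and using that each internal node is counted twice while each boundary node is counted once yields
\[
\sum_{p_j \in T} c_j' \;\leq\; 2 - m_{bd}.
\]
Non-negativity of the weights then forces $m_{bd} \leq 2$, and the case $m_{bd} = 0$ is ruled out because it would force $T$ to be all of $C$, giving $\sum_j c_j' \leq 2$ and contradicting the validity $\sum c' > 2$ of $\omega'$. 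Hence each contracted subtree collapses either to a smooth point ($m_{bd}=1$) or to a node ($m_{bd}=2$); this gives (S1), and the displayed inequality forces the total weight at any collision to be at most $1$, giving (S3). Smoothness of the markings (S2) and ampleness of the new log canonical bundle (S4) are immediate from the construction.

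To upgrade this to a morphism of moduli spaces, one performs the analogous contraction in families: for a family of $\omega$-stable curves over a base $B$ with sections $\sigma_M, \sigma_0, \ldots, \sigma_{n-2}$, the relative dualizing sheaf twisted by the new weighted sections yields, after a relative contraction of the fiberwise destabilized locus, a family of $\omega'$-stable curves over $B$. The universal property of $\overline{M}_{0,\omega'}$ then defines $\rho$. Birationality is immediate from the pointwise description: on the dense open locus of $\overline{M}_{0,\omega}$ where no component is destabilized, $\rho$ is the identity. The main obstacle is the combinatorial bound $m_{bd} \leq 2$, which underpins the nodal structure of the image; the family-level argument is essentially formal given this pointwise picture.
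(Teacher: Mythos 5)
The paper does not prove this statement; it is quoted directly from Hassett \cite[Theorem 4.1]{Has03}, so there is no internal proof to compare against. Judged on its own terms, your proposal contains a genuine gap at the step ``(S4) \ldots immediate from the construction.'' Your combinatorial lemma is fine: summing the destabilization inequality over a maximal connected union $T$ of $\omega'$-destabilized components does give $\sum_{p_j\in T} c_j' \le 2 - m_{bd}$, hence $m_{bd}\le 2$. The problem is that contracting these maximal unions \emph{once} need not produce an $\omega'$-stable curve. When $T$ is collapsed to a smooth point of an adjacent component $Z$, the node joining $T$ to $Z$ (which contributed $1$ to the stability count of $Z$) is replaced by the marked points of $T$, whose total $\omega'$-weight is at most $1$ and typically less. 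So the stability quantity of $Z$ drops by $1-\sum_{p_j\in T}c_j'\ge 0$, and a component that was not destabilized can become destabilized after the contraction. Concretely, take $n=5$, $\omega=(1,1,1,1,1)$, $\omega'=(0.15,\,0.15,\,0.5,\,0.7,\,0.7)$ (sum $2.2>2$), and the chain $C=T\cup Z\cup Z'$ with $q_1,q_2$ on $T$, $q_3$ on $Z$, $q_4,q_5$ on $Z'$. Only $T$ is $\omega'$-destabilized ($1+0.3\le 2$), while $Z$ satisfies $2+0.5>2$. After contracting $T$, the component $Z$ carries one node and weight $0.8$, so $1.8\le 2$ and $Z$ is now unstable. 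Your map therefore does not land in $\overline{M}_{0,\omega'}$; the contraction must be iterated (``successively collapse,'' as Hassett phrases it), and conditions (S3)--(S4) must then be verified for the terminal curve of the iteration, e.g.\ by applying your summation argument to the full subtree that is eventually collapsed rather than to the initially destabilized one.

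A secondary, but real, shortfall is the dismissal of the family-level step as ``essentially formal.'' Producing the contraction in families is where the substance of Hassett's proof lies: one realizes the $\omega'$-stable family as $\operatorname{Proj}$ of the section ring of a twist of the relative dualizing sheaf by the $\omega'$-weighted sections on the universal curve, which requires positivity and base-point-freeness statements, not just the fiberwise picture. For the purposes of this paper only the existence of $\rho$ is used, so citing \cite{Has03} is the intended route; but as a standalone proof your argument needs the iteration repaired and the relative construction made precise.
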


\section{Main results}
\label{sec: main-results}

Throughout this section, $G$ will be a graph on $n-2$ vertices, labeled $v_1,\ldots, v_{n-2}$. Furthermore, we fix a bijection between the set of vertices $\{v_i\}$ and the facets of the $(n-2)$-simplex $\Delta_{n-2}$. We label the markings on an $n$-pointed rational curve by $p_M, p_0, p_1,\ldots, p_{n-2}$. We think of $p_M$ as being the moving point in Kapranov's construction, and $p_0$ as being the identity of the torus in $\PP^{n-3}$.

We begin with the following proposition.

\begin{proposition}
\label{prop:weightrelations}
Let $\omega = (c_M, c_0,\ldots, c_{n-2})$ be a weight vector such that
\[
\overline M_{0,\omega} \cong X(\mathcal P G).
\]
Then we have the following relationships among the entries of $\omega$.
\begin{enumerate}[(W1)]
\item For every nontrivial tube $T\subset V(G)$,
\[
c_0+\sum_{i\in T} c_i>1.
\]
\item For every non-tube $D\subset V(G)$,
\[
c_0+\sum_{j\in D} c_j \leq 1.
\]
\end{enumerate}
\end{proposition}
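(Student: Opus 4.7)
The plan is to realize both $\overline{M}_{0,\omega}$ and $X(\mathcal{P}G)$ as iterated blowups of $\mathbb{P}^{n-3}$ and to extract (W1) and (W2) by comparing their blowup centers against Hassett's stability criterion for boundary divisors.

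I would first argue that under the labeling fixed at the start of this section, $c_M=1$. The toric blowdown $X(\mathcal{P}G)\to\mathbb{P}^{n-3}$, composed with the given isomorphism, yields a birational morphism $\overline{M}_{0,\omega}\to\mathbb{P}^{n-3}$. Recognizing $\mathbb{P}^{n-3}$ as the Hassett space $\overline{M}_{0,\omega_0}$ with $\omega_0=(1,\epsilon,\ldots,\epsilon)$, as discussed in Section~\ref{sec: Hassett}, this map must agree with the Hassett reduction morphism; by \cite[Theorem~4.1]{Has03} such a reduction exists iff $\omega\geq\omega_0$ coordinatewise, forcing $c_M=1$.

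With $c_M=1$ in hand, the Kapranov--Hassett blowup model realizes $\overline{M}_{0,\omega}$ as the iterated blowup of $\mathbb{P}^{n-3}$, in order of increasing dimension, along those linear spans $L_I=\operatorname{span}\{p_i:i\in I\}$ with $I\subset\{0,1,\ldots,n-2\}$ and $|I|\geq 1$ for which the corresponding boundary divisor $\delta_{\{M\}\cup I\,\vert\,\{0,\ldots,n-2\}\setminus I}$ is $\omega$-stable. Since $c_M=1$, stability on the $\{M\}\cup I$ side is automatic, and the criterion simplifies to $\sum_{k\in \{0,\ldots,n-2\}\setminus I}c_k>1$. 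On the toric side, $X(\mathcal{P}G)$ is by construction the iterated blowup of $\mathbb{P}^{n-3}$ along the coordinate subspaces $L_{T^c}$ for tubes $T\subset V(G)$, where $T^c=\{1,\ldots,n-2\}\setminus T$; these are exactly the Kapranov centers with $0\notin I$.

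Matching the two collections of centers then yields both inequalities. For a nontrivial tube $T$ with $|T|\leq n-3$, the subspace $L_{T^c}$ must also be a Kapranov blowup center of $\overline{M}_{0,\omega}$, and the criterion with $I=T^c$ becomes $c_0+\sum_{i\in T}c_i>1$, which is (W1); the remaining case $T=V(G)$, possible only if $G$ is connected, follows from $\sum c_i>2$ and $c_M\leq 1$. Dually, for a non-tube $D$ the subspace $L_{D^c}$ is not a center for $X(\mathcal{P}G)$ and hence not for $\overline{M}_{0,\omega}$, so the Hassett criterion must fail, giving $c_0+\sum_{j\in D}c_j\leq 1$, which is (W2). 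The main obstacle is the first step: identifying the composite morphism with a Hassett reduction in order to pin down $c_M=1$; once this is done, the rest is a direct bookkeeping of explicit blowup loci.
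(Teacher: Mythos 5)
Your overall strategy---matching the blowup centers of $X(\PG)$ over $\PP^{n-3}$ against a Kapranov-style blowup model of $\overline M_{0,\omega}$---is workable in outline, and you even handle a case the paper's proof leaves tacit (the tube $T=V(G)$, via $\sum c_i>2$ and $c_M\leq 1$). But your first step, ``forcing $c_M=1$,'' contains a genuine gap, and it is precisely the step your proof of (W2) leans on. Two problems. First, \cite[Theorem 4.1]{Has03} is an implication, not an equivalence: it produces a reduction morphism when $\omega\geq\omega'$ coordinatewise, and says nothing in the converse direction. The converse is in fact false, because the weight domain $(0,1]^n$ has a chamber structure (see Remark~\ref{rem: specific-weights}): taking $c_M=1-\delta$ for small $\delta>0$ leaves the moduli functor unchanged, so the reduction to $\PP^{n-3}=\overline M_{0,(1,\epsilon,\ldots,\epsilon)}$ still exists while $c_M<1$. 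In particular, no argument can literally force $c_M=1$ from the hypothesis, which is only an isomorphism of varieties. Second, you have not justified that the composite $\overline M_{0,\omega}\cong X(\PG)\to\PP^{n-3}$ agrees with a Hassett reduction morphism at all: an abstract birational morphism of the underlying varieties need not respect the modular structure. (To be fair, the paper's own proof makes an analogous identification---it equates the reduction $\rho:\overline M_{0,n}\to\overline M_{0,\omega}$ with the toric blowdown to $X(\PG)$---so that much is a shared standing convention; but the paper never claims, nor needs, $c_M=1$.)

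The gap is harmless for (W1): if $L_{T^c}$ is a blowup center, the exceptional divisor survives, so both sides of the corresponding boundary divisor are $\omega$-stable, and stability of the side marked by $\{0\}\cup T$ gives $c_0+\sum_{i\in T}c_i>1$ with no input about $c_M$. For (W2), however, knowing only that $L_{D^c}$ is not a center tells you that \emph{one} of the two sides of the boundary divisor is unstable, and without $c_M=1$ it could a priori be the side marked by $\{M\}\cup D^c$, which yields the wrong inequality. The paper circumvents this by examining the reduction map geometrically rather than numerically: the exceptional divisor $E_D$ over $\bigcap_{i\in D}H_i$ is contracted along its fibers over (the strict transform of) that center, and those fiber directions parametrize exactly the moduli of the component marked by $\{0\}\cup D$; hence it is that component which is destabilized, giving $c_0+\sum_{j\in D}c_j\leq 1$ unconditionally. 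Substituting this fiber-direction argument for your first step repairs the proof, and it also supplies the justification for the ``Kapranov--Hassett blowup model'' you invoke as a black box, since the same analysis of which directions $\rho$ contracts is what identifies $\overline M_{0,\omega}$ with an iterated blowup along the surviving centers. Absent that, you would need an independent argument that every divisor $\delta_{\{M\}\cup I}$ with $|I|\geq 1$ survives reduction, which is exactly what the flawed appeal to $c_M=1$ was attempting to provide.
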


\begin{proof}
Let $G$ be a graph on $n-2$ vertices, and fix a bijection of the vertices $\{v_i\}$ with the coordinate hyperplanes $\{H_i\}$ of $\PP^{n-3}$. Moreover, we fix an identification of $\overline M_{0,n}$ with the iterated blowup of $\PP^{n-3}$. We consider the reduction map
\[
\rho: \overline M_{0,n}\to \overline M_{0,\omega} = X(\PG).
\]
Let $T$ be a nontrivial tube consisting of vertices $v_{i_1},\ldots, v_{i_e}$, and let $E_T$ be the exceptional divisor in $\overline M_{0,n}$ above the linear subspace $H_{i_1}\cap\cdots \cap H_{i_e}$ in $\PP^{n-3}$. Observe that $\rho$ restricts to an isomorphism on this locus, and thus, the universal curve $\cC_{0,n}$ is $\omega$-stable on $E_T$. Over the generic point of $E_T$, $\cC_{0,n}$ is an $\omega$-stable curve with two components. The components are marked by the sets $I$ and $I^C$, where $I = \{0,i\}_{i\in T}$, whence (W1) follows. Let $D$ be a non-tube and $E_D$ be the exceptional divisor in $\overline M_{0,n}$ above the linear subspace of $\PP^{n-3}$ given by $\bigcap_{i\in D} H_i$. The reduction morphism $\rho$ contracts $E_D$, specifically by forgetting the moduli of the component marked by the set $I$, and similarly to the above case, we conclude (W2). 
\end{proof}

The above proposition yields two natural obstructions for a toric graph associahedron to be a Hassett space.

\noindent \textbf{Obstruction A.} Let $D$ be a non-tube. If there exists a nontrivial tube $T_D\subset D$, then $X(\mathcal P G)$ cannot be isomorphic to $\overline M_{0,\omega}$ for any weight vector $\omega$.

\begin{proof}
Observe that since $T_D$ is a tube, we may apply the inequality (W2) above to obtain
\[
c_0+\sum_{i\in T_D} c_i>1.
\]
On the other hand, $D$ is not a tube, so
\[
c_0+\sum_{j\in D} c_j\leq 1.
\]

Subtracting these inequalities, we obtain
\[
\sum_{i\in D\setminus T_D} c_i <0,
\]
which is impossible.
\end{proof}

\noindent \textbf{Obstruction B.} Suppose there exists a set of vertices $S\subset V(G)$ such that $S$ can be partitioned into $k$ nontrivial tubes and can also be partitioned into $k'$ non-tubes, with $k'\leq k$. Then $X(\mathcal P G)$ cannot be isomorphic to $\overline M_{0,\omega}$ for any weight vector $\omega$.

\begin{proof}
Let
\[
S = \coprod_{i=1}^k T_i = \coprod_{i=1}^{k'} D_i
\]
where the $T_i$'s are nontrivial tubes and the $D_i$'s are non-tubes.  We then have
\begin{eqnarray*}
\sum_{i=1}^k (c_0 + \sum_{j \in T_i} c_j ) = k c_0 +\sum_{i\in S} c_i &>&k\\
\sum_{i=1}^{k'} (c_0 + \sum_{j \in D_i} c_j ) = k' c_0+\sum_{i\in S} c_i&\leq& k'.
\end{eqnarray*}
Subtracting the inequalities, we see that if $k'\leq k$, then $c_0>1$, which is impossible, and we obtain Obstruction B.

\end{proof}

\begin{remark}
Obstruction A is more generally an obstruction to $X(\mathcal P G)$ being modular in the sense of Smyth~\cite{Smyth}. However, Obstruction B is not (see, for example, the discussion in~\cite[Section 7.5]{GJM}).  We note that the class of graphs that are unobstructed by A is strictly larger than that of graphs that are unobstructed by both A and B.  For example, complete bipartite graphs are obstructed by B but not A.  It would be interesting to explore which graph associahedra are isomorphic to modular compactifications of $M_{0,n}$.
\end{remark}

\subsection{Proof of main theorem} Assume that $X(\mathcal P G)$ is isomorphic to a Hassett space, and let $I\subset V(G)$ be a maximal independent set. Suppose there exists a vertex $v$ with $v\notin I$.  By definition, there exists a vertex $w \in I$ such that there is an edge between $v$ and $w$.  If there is another vertex $u \in I$ with no edge connecting it to $v$, then $\{ v,w \}$ is a tube and $\{ u,v,w \}$ is a non-tube, contradicting Obstruction A.  It follows that there must exist edges between $v$ and $u$ for every vertex $u\in I$.

Now, consider another vertex $v' \neq v$ not lying in the independent set $I$.  If $\vert I \vert = 1$, then there is an edge between $v$ and $v'$ by the assumption that $I$ is maximal.  Otherwise, if there is no edge between $v$ and $v'$, then for some $u,u' \in I$, both $\{ u,v \}$ and $\{ u',v' \}$ are tubes, but $\{ u,u' \}$ and $\{ v,v' \}$ are non-tubes, contradicting Obstruction B.  It follows that there is an edge between any two vertices not contained in the independent set $I$.  Inductively, we may reconstruct $G$ from $I$ as an iterated cone.

It remains to show that for such $G$, $X(\mathcal P G)$ is indeed a Hassett space. We write $G$ as an iterated cone
\[
Cone^{n-2-k}\left(\sqcup_{i=1}^k v_i \right).
\]
Recall that we have fixed a bijection between vertices $\{v_i\}$ of $G$, and facets $\{F_i\}$ of $\Delta_{n-3}$. Accordingly, there is a bijection between $\{v_i\}$ and torus invariant points $\{p_1,\ldots, p_{n-2}\}$ of $\PP^{n-3}$, via the induced bijection between $\{v_i\}$ and vertices opposite to the facets $\{F_i\}$.

We call the torus fixed points $\{p_1,\ldots, p_k\}$ corresponding to the discrete base set of $G$ \textit{independent points}, and the remaining points $\{p_{k+1},\ldots, p_{n-2}\}$ \textit{cone points}. Now, let $\omega = (c_M,c_0,c_1,\ldots, c_k,c_{k+1},\ldots, c_{n-2})$.  By symmetry, we may assume that the weight vector is unchanged by permuting the cone points and independent points amongst themselves. We let $c_c$ denote the weight of the cone points and $c_i$ denote the weight of the independent points. The point $p_M$ in Kapranov's construction is not allowed to coincide any of the other marked points, so we are forced to set $c_M = 1$. 

Recall that points are allowed to collide precisely when the sum of their weights is less than $1$. Every cone vertex is connected to every other vertex in the graph, so the non-tubes of $G$ are precisely subsets of independent vertices of size at least two. Thus we may conclude that any collection of independent points may coincide with $p_0$. Conversely, any subset of vertices that contains a cone vertex is a tube, so none of the cone points may collide with $p_0$.  Set $c_i$ equal to $\epsilon$ for $\epsilon$ sufficiently small, as this allows them to collide arbitrarily. We set $c_c = (k+2)\epsilon$ where $k$ is the number of independent points. Finally, we set $c_0 = 1-(k+1)\epsilon$. With these weights $c_0$ cannot collide with the cone points, as the sum of weights exceeds $1$. The space $\overline M_{0,\omega}$ can now be obtained via Hassett's reduction map
\[
\overline M_{0,n}\to \overline M_{0,\omega},
\]
by contracting unstable loci in $\overline M_{0,n}$. By blowing down from Kapranov's realization of $\overline M_{0,n}$, the same arguments above, applied to the universal curve $\mathcal C_{0,n}$ show that this is the same variety $X(\mathcal{P}G)$ obtained by blowup of $\PP^{n-3}$. \qed

\begin{remark}\label{rem: specific-weights}{\bf (Specific weights for the moduli spaces)} The parameter space for weights $(0,1]^n$ has a natural wall and chamber structure, so for $\omega_1,\omega_2$ in the same chamber, the moduli functor for $\omega_1$- and $\omega_2$-stable curves coincide. We record here the weights obtained in the proof above so the reader may have access to it easily.

Let $G$ be a graph on $n-2$ vertices, obtained as an iterated cone over the discrete set on $k$ vertices $v_1,\ldots, v_k$. Then, $X(\mathcal PG)\cong \overline M_{0,\omega}$ for the weight vector $(c_M,c_0,c_1,\ldots, c_{n-2})$ for $\epsilon$ sufficiently small:
\begin{eqnarray*}
c_M &=& 1\\
c_0 &=& 1-(k+1)\epsilon\\
c_c &=& (k+2)\epsilon \ \ \textnormal{when $p_c$ is a cone point} \\
c_i &=& \epsilon \ \ \textnormal{when $p_i$ is an independent point.} \\
\end{eqnarray*}
\end{remark}

%
%
%

\section{Examples}

\subsection{Graphs on $3$ vertices}

There are two connected graphs on $3$ vertices, the cycle $K_3$, and the path graph $P_3$. The toric graph associahedron $X(\mathcal P K_3)$ is the blowup of $\PP^2$ at its $3$ torus fixed points. The toric graph associahedron $X(\mathcal P P_3)$ is the blowup of $\PP^2$ at $2$ torus fixed points. The Grothendieck--Knudson compactification of the moduli space of $5$-pointed curves is isomorphic to the blowup of $\PP^2$ at its $3$ torus fixed points, and the identity of the torus.

The cycle on $3$ vertices coincides with the complete graph, and thus the toric graph associahedron $X(\mathcal P K_3)$ is isomorphic to the Losev--Manin compactification $\overline M_{0,\omega}$ for $\omega = (1,1,\epsilon,\epsilon,\epsilon)$. On the other hand, the path graph $P_3$ can be viewed as the cone over the discrete set on $2$ vertices. The toric graph associahedron $X(\mathcal P P_3)$ is isomorphic to $\overline M_{0,\omega'}$ for $\omega' = (1,\frac{1}{2},\frac{1+\epsilon}{2},\epsilon,\epsilon)$.

The Losev--Manin stable $5$-pointed rational curves are those chains of $\PP^1$'s, where $p_1$ and $p_2$ lie on either end of the chain, and each component has at least one of the light points $p_3,p_4$, or $p_5$. Thus, the chain can have at most $3$ components.

If $C$ is a $\omega'$-stable $5$-pointed rational curve with two components, then $p_2$ and $p_3$ must lie on the same component as each other, but a different component than $p_1$. Moreover, the light points $p_4$ and $p_5$ must lie on distinct components of $C$. In fact, $C$ cannot have $3$ components. To see this, assume otherwise. Then $C$ is a chain of $\PP^1$'s
\[
C = C_1\cup C_2\cup C_3.
\]
We may assume without loss of generality that $p_1$ lies on $C_1$. Then $p_2$ and $p_3$ must both lie on $C_3$. To stabilize the components $C_1$ and $C_3$, they must both be additionally marked by one of the light points. However, now the central component $C_2$ has $2$ nodes and no marks, and is unstable.

\[
\begin{tikzcd}
\phantom{1} & {\color{gray}\overline{M}_{0,5} \cong Bl_{4}(\PP^2)} \arrow[color=gray]{dr} \arrow[color=gray]{dl} & \phantom{1}\\
\overline{M}_{0,n}^{LM}\arrow{d} \arrow{rr}{\cong} & & X(\mathcal{P}{K_3})\arrow{d}\\
\overline{M}_{0,\omega'} \arrow[swap]{rr}{\cong} \arrow[color=gray]{dr} & & X(\mathcal P P_3) \arrow[color=gray]{dl} \\
\phantom{1} & {\color{gray}\PP^{2}} & \phantom{1}\\
\end{tikzcd}
\]

Note that, although $X (\mathcal P P_3)$ is isomorphic to $\overline{M}_{0,\omega'}$, the universal families over the two spaces are different.  For example, in the toric model, if the moving point lies on the line between one of the points of weight $\frac{1}{2}$ and one of the points of weight $\epsilon$, then the unique conic through these 5 points is a union of two lines, and the resulting pointed curve is not $\omega'$-stable.

\subsection{Graphs on $4$ vertices} There are, up to symmetry, $6$ different graphs on $4$ vertices, but only $3$ graphs that can be obtained as iterated cones over discrete sets. These graphs are the complete graph $K_4$, the complete graph minus a single edge $V_4$, and the star graph on $4$ vertices $S_4$.

\begin{figure}[h!]
\begin{tikzpicture}[scale=1.25]
\draw [ball color=black] (0,0) circle (0.5mm);
\draw [ball color=black] (1,0) circle (0.5mm);	
\draw [ball color=black] (1,1) circle (0.5mm);
\draw [ball color=black] (0,1) circle (0.5mm);

\draw (0,1)--(1,1)--(1,0); \draw (1,1)--(0,0);

\draw [ball color=black] (3,0) circle (0.5mm);
\draw [ball color=black] (4,0) circle (0.5mm);	
\draw [ball color=black] (4,1) circle (0.5mm);
\draw [ball color=black] (3,1) circle (0.5mm);

\draw (3,0)--(3,1)--(4,1)--(4,0)--(3,0)--(4,1);

\draw [ball color=black] (6,0) circle (0.5mm);
\draw [ball color=black] (7,0) circle (0.5mm);	
\draw [ball color=black] (7,1) circle (0.5mm);
\draw [ball color=black] (6,1) circle (0.5mm);

\draw (6,0)--(6,1)--(7,1)--(7,0)--(6,0)--(7,1); \draw (7,0)--(6,1);

\end{tikzpicture}
\caption{Graphs on $4$ vertices giving rise to Hassett spaces.}
\end{figure}
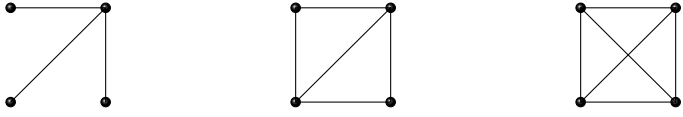

We discuss the star graphs and complete graphs in greater generality in the forthcoming subsection.

The graph $V_4$ can be obtained as the twice iterated cone over the discrete set on $2$ vertices, $Cone^2(v_1\sqcup v_2)$. The toric variety $X(\mathcal P V_4)$ is isomorphic to $\overline M_{0,\omega}$ for
\[
\omega = (1,1-3\epsilon, 4\epsilon,4\epsilon,\epsilon, \epsilon).
\]

This space can be obtained from the Losev-Manin space $\overline{M}_{0,6}^{LM}$ by blowing down the divisor $\Delta_{134}$.  To put this another way, suppose that $C = C_1 \cup C_2$ is a curve with two components, with three marked points on $C_1$ and three marked points on $C_2$.  Then $C$ is $\omega$-stable if and only if at least one of the ``light'' points with weight $\epsilon$ lies on the same component as the ``heavy'' point with weight 1.

\subsection{Complete graphs and star graphs} As we have discussed above, when $G = K_{n-2}$, then $X(\mathcal P K_{n-2})$ is the permutohedral variety, isomorphic to the Losev--Manin compactification of $M_{0,n}$. The graph $K_{n-2}$ is obtained as the $(n-3)$-times iterated cone over a single vertex. On the other extreme, we may consider the cone over the discrete set on $n-3$ vertices. The resulting graph is the star graph on $n-2$ vertices, denoted $S_{n-2}$.

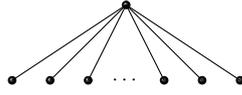
\begin{figure}[h!]
\begin{tikzpicture}
\draw [ball color = black] (-1.5,0) circle (0.5mm);
\draw [ball color = black] (-1,0) circle (0.5mm);
\draw [ball color = black] (-0.5,0) circle (0.5mm);

\draw [ball color = black] (1.5,0) circle (0.5mm);
\draw [ball color = black] (1,0) circle (0.5mm);
\draw [ball color = black] (0.5,0) circle (0.5mm);
\draw node at (0,0) {\tiny $\cdots$};

\draw [ball color=black] (0,1) circle (0.5mm);

\foreach \a in {-1.5,-1,-0.5,1.5,1,0.5}
	\draw (0,1)--(\a,0);
\end{tikzpicture}
\caption{The star graph is a cone over a discrete set.}
\end{figure}

Observe that the tubes of $S_d$ are single vertices, or any subset of the vertices containing the cone point. Using this fact, it is straightforward to check that the graph associahedron $\mathcal P S_d$ can be described as follows. Choose a distinguished facet $F_0$ of the simplex $\Delta_d$, corresponding to the unique high-valence vertex of $S_d$. Then, $\mathcal P S_d$ may be obtained from $\Delta_d$ by truncating all faces lying in $F_0$. Correspondingly, the toric variety $X(\mathcal P S_d)$ is obtained by choosing a distinguished coordinate hyperplane $H_0$, and blowing up all coordinate planes contained in $H_0$. Observe that the proper transform $E$ of $H_0$ in $X(\mathcal P S_d)$ is isomorphic to the $(d-1)$ dimensional permutohedral variety.

It follows from the main result that
\[
X(\mathcal P S_{n-2}) \cong \overline M_{0,\omega},
\]
where $\omega = (1,\frac{1}{2},\frac{1}{2}+\epsilon,\epsilon,\epsilon,\ldots, \epsilon)$.

The locus of curves consisting of two components, where one component is marked with $p_2$ and $p_3$ is isomorphic to the Losev--Manin compactification of $M_{0,{n-1}}$. It is straightforward to check that this stratum coincides with the torus invariant exceptional divisor $E$ of $X(\mathcal P S_{n-2})$ described above. 

\begin{remark}
For different choices of weights $\omega_1$ and $\omega_2$, it may be possible for $\overline M_{0,\omega_1}$ to be isomorphic to $\overline M_{0,\omega_2}$ as varieties but \textit{not} as moduli spaces, i.e. the universal families may not coincide. For instance, choosing $\omega = (1,\frac{1}{2},\frac{1}{2},\epsilon,\epsilon,\ldots, \epsilon)$, the space $\overline M_{0,\omega}$ is isomorphic to $X(\mathcal P S_{n-2})$ as above. Here, the Losev--Manin compactification of $M_{0,n-1}$ appears as the locus where $p_2$ and $p_3$ coincide.
\end{remark}

\bibliographystyle{siam}
\bibliography{ToricGraphAssociahedra}

\begin{thebibliography}{10}

\bibitem{ACP}
{\sc D.~Abramovich, L.~Caporaso, and S.~Payne}, {\em The tropicalization of the
  moduli space of curves}, Ann. Sci. {\'E}c. Norm. Sup{\'e}r.,  (To appear).

\bibitem{Bloom11}
{\sc J.~M. Bloom}, {\em A link surgery spectral sequence in monopole floer
  homology}, Adv. Math., 226 (2011), pp.~3216--3281.

\bibitem{BT94}
{\sc R.~Bott and C.~Taubes}, {\em On the self-linking of knots}, J. Math.
  Phys., 35 (1994), pp.~5247--5287.

\bibitem{BK}
{\sc J.~Bryan and D.~Karp}, {\em The closed topological vertex via the
  {C}remona transform}, J. Algebraic Geom., 14 (2005), pp.~529--542.

\bibitem{CDF}
{\sc M.~Carr, S.~L. Devadoss, and S.~Forcey}, {\em Pseudograph associahedra},
  Journal of Combinatorial Theory, Series A, 118 (2011), pp.~2035--2055.

\bibitem{CD06}
{\sc M.~P. Carr and S.~L. Devadoss}, {\em Coxeter complexes and
  graph-associahedra}, Topology Appl., 153 (2006), pp.~2155--2168.

\bibitem{CT13}
{\sc A.-M. Castravet, J.~Tevelev, et~al.}, {\em $\overline{M}_{0,n}$ is not a
  mori dream space}, Duke Math. J., 164 (2015), pp.~1641--1667.

\bibitem{KRRW}
{\sc {D. Karp, D. Ranganathan, P. Riggins, and U. Whitcher}}, {\em Toric
  symmetry of $\mathbb{CP}^3$}, Adv. Theor. Math. Phys., 4 (2012),
  pp.~1291--1314.

\bibitem{Dev09}
{\sc S.~L. Devadoss}, {\em A realization of graph associahedra}, Discrete
  Mathematics, 309 (2009), pp.~271--276.

\bibitem{GJM}
{\sc N.~Giansiracusa, D.~Jensen, and H.-B. Moon}, {\em {GIT}
  {C}ompactifications of ${M}_{0,n}$ and {F}lips}, Adv. Math.,  (2012).

\bibitem{GK14}
{\sc J.~L. Gonz{\'a}lez and K.~Karu}, {\em Some non-finitely generated cox
  rings}, arXiv:1407.6344,  (2014).

\bibitem{Has03}
{\sc B.~Hassett}, {\em Moduli spaces of weighted pointed stable curves}, Adv.
  Math., 173 (2003), pp.~316--352.

\bibitem{Kap93}
{\sc M.~Kapranov}, {\em {Chow quotients of Grassmannians. {I}}}, in {IM
  Gel′fand Seminar}, vol.~16, 1993, pp.~29--110.

\bibitem{LM}
{\sc A.~Losev and Y.~Manin}, {\em New moduli spaces of pointed curves and
  pencils of flat connections}, Michigan Math. J., 48 (2000), pp.~443--472.

\bibitem{Pay09}
{\sc S.~Payne}, {\em Analytification is the limit of all tropicalizations},
  Math. Res. Lett., 16 (2009), pp.~543--556.

\bibitem{Smyth}
{\sc D.~I. Smyth}, {\em Towards a classification of modular compactifications
  of the moduli space of curves}, Invent. Math., 192 (2013), pp.~459--503.

\bibitem{Sta63}
{\sc J.~D. Stasheff}, {\em Homotopy associativity of {H}-spaces. {I}}, Trans.
  Amer. Math. Soc.,  (1963), pp.~275--292.

\end{thebibliography}

\end{document}